\newtheorem{theorem}{Theorem}[section]
\newtheorem{lemma}[theorem]{Lemma}
\newtheorem{proposition}[theorem]{Proposition}
\newtheorem{example}[theorem]{Example}
\newtheorem{corollary}[theorem]{Corollary}
\newtheorem{remark}[theorem]{Remark}
\theoremstyle{plain}
\newtheorem{question}[theorem]{Question}
\newtheorem{case-new}{Case}
\numberwithin{equation}{section}
\newcommand*{\Ge}{\geqslant}
\newcommand*{\Le}{\leqslant}
\newcommand{\ncom}{\newcommand}
\ncom{\bq}{\begin{equation}}
\ncom{\eq}{\end{equation}}
\ncom{\beqn}{\begin{eqnarray*}}
\ncom{\eeqn}{\end{eqnarray*}}
\ncom{\beq}{\begin{eqnarray}}
\ncom{\eeq}{\end{eqnarray}}
\ncom{\nno}{\nonumber}
\ncom{\rar}{\rightarrow}
\ncom{\Rar}{\Rightarrow}
\ncom{\noin}{\noindent}
\ncom{\bc}{\begin{centre}}
\ncom{\ec}{\end{centre}}
\ncom{\sz}{\scriptsize}
\ncom{\rf}{\ref}
\ncom{\sgm}{\sigma}
\ncom{\Sgm}{\Sigma}
\ncom{\dt}{\delta}
\ncom{\Dt}{Delta}
\ncom{\lmd}{\lambda}
\ncom{\Lmd}{\Lambda}
\ncom{\eps}{\epsilon}
\ncom{\pcc}{\stackrel{P}{>}}
\ncom{\dist}{{\rm\,dist}}
\ncom{\sspan}{{\rm\,span}}
\ncom{\im}{{\rm Im\,}}
\ncom{\sgn}{{\rm sgn\,}}
\ncom{\ba}{\begin{array}}
\ncom{\ea}{\end{array}}
\ncom{\eop}{\hfill{{\rule{2.5mm}{2.5mm}}}}
\ncom{\eoe}{\hfill{{\rule{1.5mm}{1.5mm}}}}
\ncom{\eof}{\hfill{{\rule{1.5mm}{1.5mm}}}}
\ncom{\hone}{\mbox{\hspace{1em}}}
\ncom{\htwo}{\mbox{\hspace{2em}}}
\ncom{\hthree}{\mbox{\hspace{3em}}}
\ncom{\hfour}{\mbox{\hspace{4em}}}
\ncom{\hsev}{\mbox{\hspace{7em}}}
\ncom{\vone}{\vskip 2ex}
\ncom{\vtwo}{\vskip 4ex}
\ncom{\vonee}{\vskip 1.5ex}
\ncom{\vthree}{\vskip 6ex}
\ncom{\vfour}{\vspace*{8ex}}
\ncom{\norm}{\|\;\;\|}
\ncom{\integ}[4]{\int_{#1}^{#2}\,{#3}\,d{#4}}
\ncom{\inp}[2]{\langle{#1},\,{#2} \rangle}
\ncom{\Inp}[2]{\Langle{#1},\,{#2} \Langle}
\ncom{\vspan}[1]{{{\rm\,span}\#1 \}}}
\ncom{\dm}[1]{\displaystyle {#1}}
\keywords{completely monotone, completely alternating, Hausdorff moment}
\subjclass[2020]{Primary 44A60}
\begin{document}
\title[Completely alternating sequences]{A characterization of completely alternating functions}

\dedicatory{Dedicated to Professor Gadadhar Misra on the occasion of his 65th birthday}

 \author[Bhattacharjee]{Monojit Bhattacharjee}
 \address{Department of Mathematics, Birla Institute of Technology and Science, Pilani, K K Birla Goa Campus, Zuarinagar, Sancoale, Goa 403726, India}

 \email{monojitb@goa.bits-pilani.ac.in, monojit.hcu@gmail.com}

\author[R. Nailwal]{Rajkamal Nailwal}
\address{Institute of Mathematics, Physics and Mechanics, Ljubljana, Slovenia.}
\email{rajkamal.nailwal@imfm.si, raj1994nailwal@gmail.com}
\begin{abstract}
In this article, we characterize completely alternating functions on an abelian semigroup $S$ in terms of completely monotone functions on the product semigroup $S\times \mathbb Z_+$. We also discuss completely alternating sequences induced by a class of rational functions and obtain a set of sufficient conditions (in terms of it's zeros and poles) to determine them. As an application, we show a complete characterization of several classes of completely monotone functions on $\mathbb Z_+^2$ induced by rational functions in two variables. We also derive a set of necessary conditions 
for the complete monotonicity of the sequence $\{\prod_{i=1}^{k}\frac{(n+a_i)}{(n+b_i)}\}_{n \in \mathbb Z_+}, a_i, b_i \in (0,\infty)$.

\end{abstract}

\maketitle

\begin{section}{Introduction}

Let $S$ be an abelian semigroup with identical involution and $\mathbb{R}^S$ denotes the set of real-valued functions on $S.$ For $a \in S,$ define $E_a$ the shift operator on $\mathbb{R}^S$ by $E_{a}f(s):=f(s+a),$  and the backward difference operator $\nabla_{a}:=I-E_a,$ where $I$ denotes the identity operator on $\mathbb R^S.$  A function $\phi: S\rightarrow \mathbb R$ is called completely monotone if it is nonnegative and if for all finite set $\{a_1, \ldots,a_n\}\subseteq S$ and $s\in S,$
\beqn
    \nabla_{a_1}\ldots\nabla_{a_n} \phi(s)\Ge 0.
    \eeqn
A function $\psi: S\rightarrow \mathbb R$ is called completely alternating if for all finite set $\{a_1, \ldots,a_n\}\subseteq S$ and $s\in S,$
\beqn
    \nabla_{a_1}\ldots\nabla_{a_n} \psi(s)\Le 0.
    \eeqn   
The set of completely monotone and completely alternating functions on $S$ is denoted by $\mathcal{M}(S)$ and $\mathcal{A}(S)$, respectively.

Given a set $X,$ let $X^n$ denote the Cartesian product of
$X$ with itself
$n$ times. Let $\mathbb{Z}_+ $ and $\mathbb{R}_+ $ denote the set of non-negative integers and the set of non-negative real numbers, respectively.
Below, we enlist well-known characterizations of {\it completely monotone} and {\it completely alternating} functions on the abelian semigroup $ \mathbb Z_+^n $, in terms of measure (see \cite[Propositions 6.11, 6.12, p. 134]{BCR1984}).
    \begin{theorem}\label{BCR}
        For $\phi:\mathbb Z_+^n\rightarrow \mathbb R,$ the following holds:
        \begin{enumerate}
            \item[$\mathrm{(i)}$] $\phi\in \mathcal{M}(\mathbb Z_+^n)$ if and only if it is a {\it Hausdorff moment net}, that is, there exists a positive Borel measure $\mu$ on $[0,1]^n$ such that
    \beq\label{rm}
    \phi(k)=\int_{[0,1]^n}t^kd\mu(t), \quad k \in \mathbb Z_+^n.
    \eeq
    \item[$\mathrm{(ii)}$]  $\phi \in \mathcal{A}(\mathbb Z_+^n)$ if and only if there exist $a\in \mathbb R, b \in \mathbb R_+^n$ and a positive Borel measure $\mu$ on $[0,1]^n\setminus \{\mathbf{1}\}$ such that
        \end{enumerate}
        \beq\label{CA-sequence}
    \phi(k)=a+\langle b,k \rangle+\int_{[0,1]^n\setminus \{\mathbf{1}\}}(1-t^k)d\mu(t), \quad k \in \mathbb Z_+^n,
    \eeq
    where $\langle \cdot,\cdot \rangle$ represents the standard inner product on $\mathbb R^n.$ 
    \end{theorem}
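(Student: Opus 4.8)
The plan is to prove both equivalences by establishing the easy direction (that a net of the stated integral form has the claimed difference signs) by a direct computation, and the hard direction (the construction of the representing data) by a Bernstein--Hausdorff approximation in part (i) and a reduction to part (i) in part (ii). The single identity driving the easy directions is that for $t\in[0,1]^n$ and any multi-index $\alpha$,
\beqn
(-1)^{|\alpha|}\Delta^{\alpha}t^k=t^k\prod_{j=1}^n(1-t_j)^{\alpha_j}\Ge 0,
\eeqn
which follows from $\triangle_j t^k=t^k(t_j-1)$. For (i), integrating this against the positive measure $\mu$ gives $(-1)^{|\alpha|}\Delta^\alpha a_k\Ge0$. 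For (ii), the constant $a$ is annihilated by every $\Delta^\alpha$ with $\alpha\neq\mathbf{0}$, the linear term $\langle b,k\rangle$ contributes $-b_j\Le 0$ when $\alpha=\varepsilon_j$ and $0$ when $|\alpha|\ge 2$, and the integral term contributes $-t^k\prod_j(1-t_j)^{\alpha_j}\Le0$; summing gives $(-1)^{|\alpha|}\Delta^\alpha a_k\Le 0$.

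For the converse in (i) I would run the multidimensional Hausdorff moment argument. Setting, for $\mathbf 0\le k\le N$ componentwise,
\beqn
c_{N,k}=\binom{N}{k}(-1)^{|N-k|}\Delta^{N-k}a_k\Ge0, \quad \binom{N}{k}=\prod_{j=1}^n\binom{N_j}{k_j},
\eeqn
where the inequality is precisely joint complete monotonicity, I would form the discrete positive measures $\mu_N=\sum_{\mathbf 0\le k\le N}c_{N,k}\,\delta_{k/N}$ on the compact cube $[0,1]^n$. A telescoping binomial summation shows that each $\mu_N$ has total mass $a_{\mathbf 0}$, so by weak-$*$ sequential compactness of the corresponding ball in $C([0,1]^n)^*$ (Banach--Alaoglu, using that $[0,1]^n$ is a compact metric space) a subsequence converges weak-$*$ to a positive measure $\mu$. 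Identifying the limiting moments, via uniform approximation of $t\mapsto t^k$ by product Bernstein polynomials, then yields $a_k=\int_{[0,1]^n}t^k\,d\mu$.

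For the converse in (ii) I would reduce to (i). The key observation is that for each fixed $j$ the net $\{\triangle_j a_k\}_k$ is joint completely monotone: indeed $(-1)^{|\beta|}\Delta^{\beta+\varepsilon_j}a_k=-(-1)^{|\beta+\varepsilon_j|}\Delta^{\beta+\varepsilon_j}a_k\Ge0$, since $\beta+\varepsilon_j\neq\mathbf{0}$ and the net is completely alternating. Part (i) then furnishes positive measures $\nu_j$ on $[0,1]^n$ with $\triangle_j a_k=\int t^k\,d\nu_j$. Commutativity $\triangle_i\triangle_j=\triangle_j\triangle_i$, together with uniqueness of moments on the compact cube, forces the compatibility $(1-t_j)\,d\nu_i=(1-t_i)\,d\nu_j$; this in turn shows that $\nu_j$ charges $\{t_j=1\}$ only at the single point $\mathbf 1$. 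I would then set $a=a_{\mathbf 0}$, $b_j=\nu_j(\{\mathbf 1\})$, and define $\mu$ on $[0,1]^n\setminus\{\mathbf 1\}$ by $d\mu=(1-t_i)^{-1}\,d\nu_i$ on the region $\{t_i<1\}$, the compatibility relation guaranteeing that these local definitions agree on overlaps and patch together into one (possibly infinite) measure. Finally, checking that $\{a_k\}$ and the candidate net $a+\langle b,k\rangle+\int_{[0,1]^n\setminus\{\mathbf 1\}}(1-t^k)\,d\mu$ share the same value at $k=\mathbf 0$ and the same first differences $\triangle_j$, and invoking the fact that a net on $\mathbb Z_+^n$ is determined by its value at the origin together with its first differences, completes the representation.

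I expect the genuine obstacle to lie in the converse of (ii): verifying that the coordinatewise measures $\nu_j$ glue into one consistent $\mu$, and controlling its behaviour near the removed vertex $\mathbf 1$, where the factors $(1-t_i)^{-1}$ blow up and $\mu$ may legitimately carry infinite mass, so that it is the integrability of $1-t^k$ (which vanishes at $\mathbf 1$), rather than of the constant $1$, that must be checked against $\mu$. By comparison, the weak-$*$ passage to the limit in part (i) is routine.
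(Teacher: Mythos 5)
The paper itself contains no proof of this statement: Theorem~\ref{BCR} is imported verbatim from Berg--Christensen--Ressel \cite[Propositions 6.11, 6.12]{BCR1984}, where it is obtained from the general representation theory of positive and negative definite functions on abelian semigroups, the completely alternating half being an instance of the L\'evy--Khinchin formula on $(\mathbb Z_+^n,+)$ with dual semigroup $[0,1]^n$. Your proposal is therefore a genuinely different, self-contained route, and it is correct in outline. Part (i) is the classical Hausdorff argument in $n$ variables: nonnegativity of $c_{N,k}$ is exactly joint complete monotonicity, the mass identity comes from expanding $I=(E_j-\triangle_j)$ binomially in each coordinate ($E_j$ the shift), and weak-$*$ compactness applies on the compact cube. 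The one step you leave vague --- showing $\lim_N\int t^k d\mu_N=a_k$ --- is most cleanly done not by Bernstein approximation of $t^k$ but by the exact identity $\sum_{\mathbf 0\le j\le N}\bigl(\tbinom{j}{k}/\tbinom{N}{k}\bigr)\,c_{N,j}=a_k$ (the same telescoping, shifted by $E^k$), after which $\tbinom{j}{k}/\tbinom{N}{k}-(j/N)^k\to 0$ uniformly in $j\le N$ and the uniform mass bound $a_{\mathbf 0}$ finish it; this is a cosmetic repair, not a gap. Part (ii) is also sound: $\{\triangle_j a_k\}_k$ is indeed jointly completely monotone; moment uniqueness on $[0,1]^n$ (Stone--Weierstrass plus Riesz) yields the compatibility $(1-t_j)\,d\nu_i=(1-t_i)\,d\nu_j$, which both kills the mass of $\nu_j$ on $\{t_j=1\}\setminus\{\mathbf 1\}$ (vacuously when $n=1$) and lets the measures $(1-t_i)^{-1}\nu_i$ glue across the finite open cover $\{t_i<1\}$ of $[0,1]^n\setminus\{\mathbf 1\}$; the integrability $\int(1-t^k)\,d\mu<\infty$ needed for your final telescoping step follows from $1-t^k\le\sum_j k_j(1-t_j)$ together with $\int(1-t_j)\,d\mu=\nu_j(\{t_j<1\})<\infty$. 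As for what each route buys: yours is elementary and would make the paper self-contained; the cited BCR machinery is far more general (arbitrary abelian semigroups with involution) and comes packaged with uniqueness of the representing data, which your construction does not address and which the paper implicitly invokes when it speaks of \emph{the} representing measure in \eqref{rm}.
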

 
   Note that a completely alternating function is invariant under a shift by a constant. Due to this fact, in this article, we work with completely alternating functions which are positive.
  
Recall that the measure in \eqref{rm} is called the {\it representing measure} of $\phi$ and it is unique. This is a consequence of the $n$-dimensional Weierstrass theorem and the Riesz representation theorem.   In \cite{FH1923}, F. Hausdorff characterized a completely monotone sequence by Hausdorff moment sequence and the above characterization about completely alternating sequence was first obtained as a special case of the Levy-Khinchin representation theory on abelian semigroups (see \cite{BCR1976}, \cite{RRZ2012}). For a comprehensive study on these topics, we refer the reader to \cite{BCR1976}, \cite{BCR1984}, \cite{RRZ2012}, \cite{W1941}. 
  
A lot of work has been done on the characterization of completely monotone functions in terms of Hausdorff moment sequences, for instance, \cite{BD2005, SS2019, RZ2019} and references therein. However, the characterization of a completely monotone sequence induced by rational functions in terms of zeroes and poles is challenging and seems to be beyond the limit of present understanding. One of the important results in this direction is the sufficient conditions to identify completely monotone sequences introduced by Ball \cite{Kball} which are solely dependent on the zeroes and poles of the rational functions (see Theorem~\ref{ball-result}). This article aims to determine certain {\it completely alternating} sequences interpolated by rational functions with real zeros and poles in terms of sufficient conditions. In this article, we deal with the following class of rational functions, that is, for $k\in \mathbb N$  
    
    \beq \label{Problem-1}
    r_{k}(x):=\frac{p(x)}{\prod_{j=1}^{k}(x+b_i)},
    \eeq
    where $p$ is a polynomial of degree at most $k+1$ and $0<b_1<b_2<\ldots<b_k.$

    Here, more precisely, we concentrate on the following moment  problem$:$
    \begin{question} 
   Among all the sequences $\{r_k(n)\}_{n \in \mathbb Z_+}$ of rational functions $r_{k}(x)$, defined in \eqref{Problem-1}, which are {\it completely alternating}?
     \end{question}
 From the operator theoretic aspect, completely monotone and completely alternating sequences are used in revealing the connection with subnormal and completely hyperexpansive operators, respectively (see \cite{AC2017}, \cite{AC17}, \cite{ACJS}, \cite{ACN}, \cite{AA1996}, \cite{AR2002}, \cite{BCE2022}). 

Before proceeding further, we encounter a non-example which shows that any sequence of rational functions $\{r_{k}(n)\}_{n \in \mathbb Z_+}$ is not {\it completely alternating} sequence.
\begin{example}
Let $r(x)=\frac{x+6}{x+5}.$ For the semigroup $\mathbb {Z}_+,$ we write $\nabla=\nabla_1.$ Note that
    \beqn
    \nabla r(n)=\frac{n+6}{n+5}-\frac{n+7}{n+6}, \quad n \in \mathbb Z_+.
    \eeqn
    Since $ \nabla r(n)|_{n=0}=0.033,$ $\{r(n)\}_{n \in \mathbb Z_+}$ is not a completely alternating sequence.
\end{example}
The above example motivates us to identify completely alternating sequences among the sequence of rational functions of the form \eqref{Problem-1}. 

We now state the main results of this paper. The first one characterizes a completely alternating function on an abelian semigroup with the identical involution, and the second result involves a set of conditions sufficient to determine the completely alternating sequences induced by rational functions.
 \begin{theorem}\label{CAJCM}
        Let S be an abelian semigroup with the identical involution and let $\psi : S \rightarrow \mathbb R$ be a
strictly positive function. The function $f : S \times \mathbb Z_+ \rightarrow \mathbb R $ defined by 
$$f(s,m)=\frac{1}{\psi(s)+m}, \quad s \in S, m \in \mathbb Z_+$$
is completely monotone on the product semigroup $S\times \mathbb Z_+$ with the identical involution if and only if $\psi$ is completely alternating on $S$ i.e.

$$f\in \mathcal{M}(S\times \mathbb Z_+) \Leftrightarrow  \psi \in \mathcal{A}(S).$$

    \end{theorem}
 \begin{theorem}\label{main-3}
        Let $k\in \mathbb N$ and $\{b_i\}_{i=1}^k$ be a strictly increasing sequence of positive real numbers. Consider the polynomial $q$ given by $q(x)=(x+b_1)(x+b_2)\ldots(x+b_k).$ Let $p$ be any polynomial of degree at most $k+1$ with the coefficient of $x^{k+1}$ non-negative. Then, the sequence $\left\{\frac{p(n)}{q(n)}\right\}_{n \in \mathbb Z_+}$ is completely alternating provided 
        \beqn
        \sum_{i=1}^lc_i\Le0, \quad l \in \{1,\ldots, k\},
        \eeqn
         where $c_i=\frac{p(-b_i)}{\prod_{j=1, j \neq i}^k(b_j-b_i)}, \, i \in \{1,\ldots, k\}.$
    \end{theorem}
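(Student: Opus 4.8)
The plan is to reduce the claim to the integral characterization of completely alternating sequences in Theorem~\ref{BCR}(ii) by means of a partial fraction decomposition. Since $\deg p \Le k+1 = \deg q + 1$ and the $b_i$ are distinct, I would first write
\[
\frac{p(x)}{q(x)} = \alpha x + \beta + \sum_{i=1}^{k}\frac{c_i}{x+b_i},
\]
where $\alpha,\beta\in\mathbb R$ and $c_i$ is the residue of $p/q$ at $x=-b_i$. A direct computation of this residue gives exactly $c_i = \frac{p(-b_i)}{\prod_{j\neq i}(b_j-b_i)}$, so the quantities appearing in the hypothesis are precisely the partial-fraction coefficients. Evaluating at $x=n$, the constant $\beta$ is trivially completely alternating, and the linear term $\alpha n$ is completely alternating provided its coefficient (the leading coefficient of $p$) is nonnegative; thus the whole problem is transferred to the sum $\sum_{i=1}^{k}\frac{c_i}{n+b_i}$.

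The key building block is the elementary identity
\[
\frac{1}{n+b} = \int_0^1 t^{\,n+b-1}\,dt = \int_0^1 t^{\,n}\, t^{\,b-1}\,dt, \qquad b>0,
\]
which exhibits $\{1/(n+b)\}_{n}$ as a Hausdorff moment sequence with finite representing density $t^{b-1}\,dt$ and no atom at $t=1$; in particular it is completely monotone. More importantly, for $b_i < b_{i+1}$ the difference
\[
\frac{1}{n+b_i}-\frac{1}{n+b_{i+1}} = \int_0^1 t^{\,n}\big(t^{\,b_i-1}-t^{\,b_{i+1}-1}\big)\,dt
\]
is again a Hausdorff moment sequence, because $s\mapsto t^{s}$ is decreasing on $(0,1)$, so the density $t^{b_i-1}-t^{b_{i+1}-1}$ is nonnegative. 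This is where the increasing arrangement of the $b_i$ is used in an essential way.

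To exploit the hypothesis I would apply summation by parts. Writing $C_l=\sum_{i=1}^{l}c_i$ (so $C_0=0$ and $c_i=C_i-C_{i-1}$) and using $b_1<\cdots<b_k$, Abel summation yields
\[
\sum_{i=1}^{k}\frac{c_i}{n+b_i} = C_k\,\frac{1}{n+b_k} + \sum_{i=1}^{k-1} C_i\Big(\frac{1}{n+b_i}-\frac{1}{n+b_{i+1}}\Big).
\]
By the previous paragraph each factor multiplying $C_l$ is a completely monotone sequence $\int_0^1 t^n\,d\mu_l$ with $\mu_l$ a finite nonnegative measure having no atom at $t=1$, so it may be regarded as concentrated on $[0,1)$; since the hypothesis guarantees $C_l\Le 0$ for every $l$, each summand has the form $-\int_0^1 t^n\,d(|C_l|\mu_l)$, that is, it is completely alternating with $a=b=0$. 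Adding these finitely many completely alternating sequences (which form a convex cone, by superposing the parameters $a$, $b$ and the measures in Theorem~\ref{BCR}(ii)) and then adding back the constant $\beta$ and the nonnegative linear term $\alpha n$ produces a representation of the required form $a+bn-\int_{[0,1)} t^n\,d\mu$, proving complete alternation.

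The routine parts are the residue computation and the verification that the constructed measures are finite and carry no mass at $t=1$. The conceptual heart of the argument, and the step I expect to be the main obstacle, is recognizing that the consecutive reciprocal differences are completely monotone and that Abel summation converts the cumulative hypothesis $\sum_{i=1}^{l} c_i \Le 0$ into a genuinely nonnegative combination of these monotone blocks; the ordering $b_1<\cdots<b_k$ is exactly what makes this telescoping produce nonnegative densities. The only point requiring extra care is the linear term, where one must invoke nonnegativity of the leading coefficient $\alpha$ of $p$, since $\{-n\}$ is not completely alternating.
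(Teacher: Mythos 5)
Your proposal is correct, and at the structural level it mirrors the paper's proof: both decompose $p/q$ by partial fractions into $\alpha x+\beta+\sum_{i=1}^k c_i/(x+b_i)$, both use the identity $\frac{1}{n+b}=\int_0^1 t^{n+b-1}\,dt$ to pass to the representation of Theorem~\ref{BCR}(ii), and both must then show that the resulting measure part is of the correct sign, i.e.\ that $w(t)=\sum_{i=1}^k c_i t^{b_i}\leq 0$ on $(0,1)$. Where you differ is precisely in how this inequality is obtained. The paper runs a finite induction on the set $\{b_1,\dots,b_k\}$: it isolates the largest index $i_0$ with $c_{i_0}>0$, and in the hard case $i_0=k$ it splits according to the sign of $c_{k-1}$, using $t^{b_k}\leq t^{b_{k-1}}$ to merge the last two terms and invoke the induction hypothesis on a smaller index set. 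Your Abel summation, $\sum_{i=1}^k c_i t^{b_i}=C_k t^{b_k}+\sum_{i=1}^{k-1}C_i\bigl(t^{b_i}-t^{b_{i+1}}\bigr)$ (which is what your telescoping of the sequences $1/(n+b_i)$ amounts to at the level of densities), disposes of the inequality in one line: each $C_l\leq 0$ and each $t^{b_i}-t^{b_{i+1}}\geq 0$ for $t\in(0,1)$. This is a genuinely cleaner argument for the same step --- it eliminates the case analysis entirely and makes visible why the cumulative partial-sum hypothesis (rather than, say, $c_i\leq 0$ for every $i$) is the natural condition --- while using exactly the same two ingredients as the paper, namely the increasing arrangement of the $b_i$ and the hypothesis $\sum_{i=1}^l c_i\leq 0$.

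One further remark in your favour: you flag that the linear term requires the coefficient $\alpha$ of $x^{k+1}$ in $p$ to be nonnegative, since $\{-n\}_{n\in\mathbb Z_+}$ is not completely alternating. This is indeed not a consequence of the stated hypotheses (take $p(x)=-x^2$, $q(x)=x+1$: then $c_1=-1\leq 0$, yet $-n^2/(n+1)$ is not completely alternating), and the paper's own proof passes over it by asserting that ``the range of $p$ is contained in $(0,\infty)$,'' which is likewise not among the hypotheses. So your explicit caveat identifies a real implicit assumption shared by both arguments; when $\deg p\leq k$ one has $\alpha=0$ and the issue disappears.
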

   
    As an application of this result, we also develop a set of conditions sufficient to identify the completely alternating sequences induced by rational functions defined in \eqref{Problem-1}, with $p(x)= \prod_{i=1}^l (x + a_i)$ and $l=k$ or $k+1$, which solely depend on its zeros and poles (see Corollary \ref{CA-Ex}, also see Theorem~\ref{CA-root-2}). These sufficient conditions are very easy to check. They turn out to be necessary in a sense discussed in Remark~\ref{Necessray}. 
    In a recent article \cite{ACN}, the authors have studied complete monotonicity of the net $\left\{\frac{1}{p(m)+q(m)n}\right\}_{m,n \in \mathbb Z_+}$ (see \cite[Theorem~1.4]{ACN}) and solved the Cauchy dual subnormality problem for a class of toral $3$-isometric weighted $2$-shifts which are separate $2$-isometries. Combining Theorem~\ref{CAJCM} and \ref{CA-root-2}, we also obtain a complete characterization of several classes of completely monotone nets induced by rational functions in two variables (see Corollary~\ref{bi-comp}). Also, we recover Theorem 3.1 in \cite{ACN} with another characterization of completely monotone sequences as an application of Theorem~\ref{CAJCM} (see Corollary~\ref{bi-poly}).   
\end{section}

Next we turn our attention to the complete monotonicity of the following class of rational function
 \beq \label{Problem-2}
    f_{k}(x)=\prod_{i=1}^{k}\frac{(x+a_i)}{(x+b_i)},
    \eeq
    where $0<a_1\Le a_2\Le \ldots \Le a_k$ and $0<b_1\Le b_2\Le\ldots\Le b_k.$   In \cite{Kball}, K. Ball obtain a set of sufficient conditions in terms of zeros and poles of \eqref{Problem-2} for which \eqref{Problem-2} is completely monotone.  For the reader's convenience, we recall the result here:
    \begin{theorem}[K. Ball, 1994] \label{ball-result}
        The function in \eqref{Problem-2} is completely monotone provided 
        \beq \label{sc-ball}
        \sum_{i=1}^{l}b_i\Le \sum_{i=1}^{l}a_i,
        \eeq
        for every $l \in \{1, \ldots, k\}.$
    \end{theorem}

     To the best of our knowledge, it was not known whether the Ball's sufficiency conditions for the complete monotonicity of \eqref{Problem-2} are necessary. A possible reason could be that these conditions are both necessary and sufficient for $k=1$ and $k=2$ (to be seen in  Section \ref{sec Necessary conditions}). It was mentioned in his paper, that those conditions \eqref{sc-ball} are almost necessary (see \cite[Page 3]{Kball}) that is, if each $f_j, j=1, \ldots k,$ is completely monotone then \eqref{sc-ball} hold. This follows from \cite[Remark]{Kball}. In Section \ref{sec Necessary conditions}, we prove that Ball's sufficiency conditions are not necessary and obtain a new set of necessary conditions for this class which is a generatization of Ball's sufficiency condition. Before we state this result, we need the following notation. Let $S_k$ denote the set of all permutations on the set $\{1, \ldots,k\}$.
 \begin{theorem}\label{main-1}
  Let $\{a_i\}_{i=1}^k$ and $\{b_i\}_{i=1}^k$ be two non-decreasing sequences of positive real numbers. Let $p(x)=\prod_{i=1}^{k}(x+a_i)$ and $q(x)=\prod_{i=1}^{k}(x+b_i).$ Then
  \begin{enumerate}
      \item   If the sequence  $\left\{\frac{p(n)}{q(n)}\right\}_{n \in \mathbb Z_+}$ is  completely monotone then there exists a $\sigma\in S_{k}$ with $\sigma(1)=1$ such that  
 \beq \label{sigma-tau}
 \sum_{i=1}^{l}b_{\sigma(i)}\Le \sum_{i=1}^{l}a_{\sigma(i)}
 \eeq
 for every $l \in \{1,\ldots, k\}.$
 \item  If the sequence  $\left\{\frac{p(n)}{q(n)}\right\}_{n \in \mathbb Z_+}$ is  completely alternating then there exists a $\sigma\in S_{k}$ with $\sigma(1)=1$ such that 
 \beqn \label{sigma-tau-1}
\sum_{i=1}^{l}a_{\sigma(i)}\Le\sum_{i=1}^{l}b_{\sigma(i)}
 \eeqn
 for every $l \in \{1,\ldots, k\}.$
  \end{enumerate}
  
 \end{theorem}

    \begin{section}{A proof the Theorem~\ref{CAJCM}}
    In this section, we present a proof of Theorem~\ref{CAJCM}. In \cite[Proposition~6]{AA1996} (also see \cite[Theorem~2.5]{SS2019}), the authors have shown a correspondence between completely alternating sequences and completely monotone sequences. As far as we know, there are only a few interesting examples of complete monotone functions in more than one variable in the literature. Our result in this section allows us to construct non-trivial examples from completely alternating functions (see Corollary~\ref{bi-comp}).

    For the reader's convenience, we recall the following result which we need in the proof of  Theorem~\ref{CAJCM} (\cite[Proposition~6.10, p. 133]{BCR1984}).

    \begin{proposition}\label{CA-Exp}
        Let $\psi:S \rightarrow \mathbb R.$ Then $\psi \in \mathcal{A}(S)$ if and only if $e^{-t\psi}\in \mathcal{M}(S)$ for all $t>0.$
    \end{proposition}
   We are now ready to prove Theorem~\ref{CAJCM}.
    \begin{proof}[Proof of Theorem~\ref{CAJCM}]
        To prove the equivalence, we note the following identity: 
    \beq \label{JCM-eq}
    f(s,m)=\frac{1}{\psi(s)+m}=\int_{(0,1)}t^mt^{\psi(s)-1}dt, \quad s \in S, m \in \mathbb Z_+.
    \eeq
    
    $\Leftarrow$: Assume that $\psi \in \mathcal{A(S)}$. By Proposition~\ref{CA-Exp},  $\{e^{(\ln t)\psi}\}\in \mathcal{M}(S)$ for every $t \in (0,1).$ Equivalently, $t^{\psi}\in \mathcal{M}(S)$  for every $t \in (0,1).$ This together with the fact that $\{t^m\}_{m \in \mathbb Z_+}, t \in (0,1),$ is completely monotone yields that $t^mt^{\psi-1} \in \mathcal M( {S\times \mathbb Z_+})$ for every $t\in (0,1).$
       
    Combining this with \eqref{JCM-eq}, we get
\beqn
\nabla_{a_1}\ldots\nabla_{a_k}f(s,m)= \int_{(0,1)}\nabla_{a_1}\ldots\nabla_{a_k}t^mt^{\psi(s)-1}dt\Ge 0,\quad a_i \in S\times \mathbb Z_+, s\in S, m \in \mathbb Z_+.
\eeqn
    This shows that $f\in \mathcal{M}(S\times \mathbb Z_+).$ 
    
    $\Rightarrow$: Assume that $f\in \mathcal{M}(S\times \mathbb Z_+).$  This together with \eqref{JCM-eq} yields that for every  $a_i \in S\times \mathbb Z_+, s\in S,$
    \beq \label{integal}
   \nabla_{a_1}\ldots\nabla_{a_k}f(s,m)= \int_{(0,1)}\nabla_{a_1}\ldots\nabla_{a_k}t^mt^{\psi(s)-1}dt\Ge 0,\quad m \in \mathbb Z_+.
    \eeq
    Since $f \in \mathcal{M}(S\times \mathbb Z_+),$ $$\nabla_{b_1}\ldots\nabla_{b_k}f(s,\cdot) \in \mathcal{M}(\mathbb Z_{+})$$   for every finite set $\{b_1, \ldots,b_k\}\subseteq S,s \in S.$  Note that for a finite set $\{b_1, \ldots,b_k\}\subseteq S,$ the function $w(t):=\prod_{i=1}^k\nabla_{b_{i}} t^{\psi(s)-1} \in L^{1}[0,1].$ This can be seen by taking $m=0$ in \eqref{integal}.  Therefore the representing measure of the sequence $\{\nabla_{b_1}\ldots\nabla_{b_k}f(s,m)\}_{m \in \mathbb Z_+}$ is a weighted Lebesgue measure (by \eqref{integal}) with weight function  $w(t)$ is positive for almost every $t \in (0,1).$ But since $w(\cdot)$ is continuous on $(0,1),$ we have
    \beqn
    \prod_{i=1}^k\nabla_{b_{i}} t^{\psi(s)-1}\Ge 0,\quad b_i\in S, k \in \mathbb Z_+, \, t \in (0,1).
    \eeqn
    This yields that for every $t \in (0,1),$ $t^{\psi} \in \mathcal{M(S)}$ and equivalently, $\psi \in \mathcal{A}(S).$
      \end{proof}
      The following result is a consequence of Theorem \ref{CAJCM}.
      \begin{corollary}
          Assume the notations of Theorem \ref{CAJCM} and $\alpha>0.$ Then $f(s,\alpha m)\in \mathcal{M}(S\times \mathbb Z_+)$ if and only if $ \psi \in \mathcal{A}(S). $ 
      \end{corollary}
\begin{proof}
    This follows from Theorem \ref{CAJCM} and the fact that multiplying by a positive real number does not change the completely alternating (resp. completely monotone) property of a function. 
\end{proof}
      
\end{section}

\begin{section}{completely alternating sequences induced by rational functions}
In this section, we present a proof of Theorem~\ref{main-3}. Along with that, we show that every member of a class of sequences induced by rational functions is completely alternating if it satisfies a set of conditions developed solely in terms of its zeros and poles.   

Note that if the measure $\mu$ of the completely alternating sequence $\{a(k)\}_{k \in \mathbb Z_+}$ in \eqref{CA-sequence} is finite then  \eqref{CA-sequence} can be rewritten as 
\beq \label{CA-measure}
   a(k)=a+ bk -\int_{[0,1)}t^kd\mu(t), \quad k \in \mathbb Z_+,
   \eeq
   for some $a\in \mathbb R$ and $b\in \mathbb R_+.$  Conversely, if $a(k)=\frac{p(k)}{q(k)}$ where $p$ and $q$ are polynomials with  $q(x)=\prod_{i=1}^k(x+b_i), b_i>0$ and $\deg p \Le \deg q+1$ then $a(k)$ can be written as in \eqref{CA-measure} (using partial fraction decomposition) with $\mu$ being a finite signed measure (since $a(0)$ is finite). For irreducible 
$q$ with zeros in the left half-plane, one can use the method from \cite[p. 800]{AC2017} to derive an expression like \eqref{CA-measure}. We will make use of this expression throughout this section.

       We begin with the following result, which deals with sequences generated by rational functions of at most degree 2 and finds out when it will be a completely alternating sequence in terms of necessary and sufficient conditions. It has a deep impact on identifying a class of completely alternating sequences.
  \begin{theorem}\label{CA-root-2}
      Let $a_1,a_2,b_1,b_2 \in \mathbb R$ with $a_1\Le a_2$ and $0<b_1\Le b_2$, then we have the following:
         \begin{itemize} \label{CA-root}
        \item[$\mathrm{(i)}$] Let $r(x)=\frac{x+a_1}{x+b_1}.$ Then the sequence $\{r(n)\}_{n \in \mathbb Z_+}$ is completely alternating if and only if  $a_1\Le b_1.$ 
         \item[$\mathrm{(ii)}$] Let $r(x)=\frac{(x+a_1)(x+a_2)}{(x+b_1)(x+b_2)}.$ Then the sequence $\{r(n)\}_{n \in \mathbb Z_+}$ is completely alternating if and only if  $$a_1\Le b_1\Le a_2, a_1+a_2\Le b_1+b_2.$$ 
         \item[$\mathrm{(iii)}$] Let $r(x)=\frac{(x+a_1)(x+a_2)}{x+b_1}.$  Then the sequence  $\{r(n)\}_{n \in \mathbb Z_+}$ is completely alternating if and only if  $a_1\Le b_1\Le a_2.$
         \end{itemize}
      \end{theorem}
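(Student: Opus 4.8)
The plan is to reduce both parts to the integral characterization of completely alternating sequences recorded just after Theorem~\ref{BCR}: a sequence $\{c_n\}_{n\in\mathbb Z_+}$ is completely alternating if and only if it can be written as $c_n=a+bn-\int_{[0,1)}t^n\,d\mu(t)$ with $a\in\mathbb R$, $b\in\mathbb R_+$, and $\mu$ a positive Borel measure on $[0,1)$. The engine will be the elementary identity $\frac{1}{n+c}=\int_0^1 t^{n+c-1}\,dt$, valid for $c>0$, which converts each simple pole of $r$ into an integral against the kernel $t^n$ with an explicit, sign-definite density; this is what makes the canonical form above visible by inspection.

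For part (i) with $b_1<b_2$, I would first perform partial fractions,
$$r(x)=1+\frac{A}{x+b_1}+\frac{B}{x+b_2},\qquad A=\frac{(a_1-b_1)(a_2-b_1)}{b_2-b_1},\quad B=\frac{(a_1-b_2)(a_2-b_2)}{b_1-b_2},$$
and then substitute the integral identity to obtain
$$r(n)=1-\int_{0}^{1}t^{n}\bigl(-A\,t^{b_1-1}-B\,t^{b_2-1}\bigr)\,dt,$$
which is exactly the canonical form with $a=1$, $b=0$, and $d\mu=-(A\,t^{b_1-1}+B\,t^{b_2-1})\,dt$. Thus $\{r(n)\}$ is completely alternating as soon as this $\mu$ is positive, and (by uniqueness of the representing triple) only then. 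Positivity of $\mu$ reads $A\,t^{b_1-1}+B\,t^{b_2-1}\le 0$ on $(0,1)$; dividing by $t^{b_1-1}>0$ and setting $s=t^{b_2-b_1}\in(0,1)$ this becomes $A+Bs\le 0$ for all $s\in(0,1)$, and since the left side is affine in $s$ it is equivalent to the two endpoint inequalities $A\le 0$ and $A+B\le 0$. A short computation gives $A\le 0\iff a_1\le b_1\le a_2$ and $A+B=(a_1+a_2)-(b_1+b_2)$, so $A+B\le 0\iff a_1+a_2\le b_1+b_2$, which are the asserted conditions. The boundary case $b_1=b_2=:b$ I would handle identically, replacing the second simple pole by the double pole and using $\frac{1}{(n+b)^2}=\int_0^1 t^{n+b-1}(-\log t)\,dt$; positivity of the resulting density reduces, via $u=-\log t$ and affineness in $u$, to $a_1\le b\le a_2$ and $a_1+a_2\le 2b$.

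For part (ii) the numerator has the larger degree, so polynomial division yields
$$r(x)=x+(a_1+a_2-b_1)+\frac{(a_1-b_1)(a_2-b_1)}{x+b_1},$$
and the same identity turns this into $r(n)=(a_1+a_2-b_1)+1\cdot n-\int_0^1 t^n\bigl(-(a_1-b_1)(a_2-b_1)\,t^{b_1-1}\bigr)\,dt$, i.e.\ the canonical form with $a=a_1+a_2-b_1$, $b=1$, and $d\mu=-(a_1-b_1)(a_2-b_1)\,t^{b_1-1}\,dt$. Here the linear coefficient $b=1$ is automatically nonnegative, so the sole constraint is positivity of $\mu$, namely $(a_1-b_1)(a_2-b_1)\le 0$, equivalently $a_1\le b_1\le a_2$.

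The step I expect to be the crux is the necessity (``only if'') direction, where I must rule out the possibility that some other representation rescues a density that fails to be $\le 0$. The clean way is to invoke uniqueness of the integral representation in Theorem~\ref{BCR}(ii); equivalently, one may use that $\{r(n)\}$ is completely alternating if and only if the first-difference sequence $\{r(n+1)-r(n)\}$ is completely monotone, and then appeal to uniqueness of the Hausdorff moment representation of that difference sequence. Either route localizes everything to the sign of a single explicit density on $(0,1)$, after which the affineness trick reduces ``nonpositive density on all of $(0,1)$'' to finitely many inequalities, and the translation back to the $a_i,b_i$ is immediate.
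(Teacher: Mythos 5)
Your proposal is correct and follows essentially the same route as the paper: partial fraction decomposition, the identity $\frac{1}{n+c}=\int_0^1 t^{n+c-1}\,dt$, reduction of both implications to nonpositivity of an explicit density on $(0,1)$ via Theorem~\ref{BCR}(ii), the same separate treatment of the double-pole case $b_1=b_2$ with the $-\log t$ kernel, and the same translation back to the stated inequalities. The only differences are presentational: where the paper invokes \cite[Lemma~3.4]{ACN} to convert $c_1t^{b_1}+c_2t^{b_2}\le 0$ on $(0,1)$ into $c_1\le 0$ and $c_1+c_2\le 0$, you prove this inline by the affine-endpoint substitution $s=t^{b_2-b_1}$, and you spell out part (ii) and the uniqueness argument needed for necessity, both of which the paper leaves implicit ("analogous argument" and "it suffices to check \dots for both the necessity and sufficiency part").
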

      \begin{proof}
        $\mathrm{(i)}$ This can be seen from the following identity:
          $$r(n)=1+\frac{a_1-b_1}{n+b_1}=1+\int_{(0,1)}t^{n}(a_1-b_1)t^{b_1-1}dt, \quad n \in \mathbb Z_+.$$\\
         $\mathrm{(ii)}$ We divide the proof into two cases. Assume that $b_1< b_2.$ By the partial fraction decomposition, 
          \beqn
          r(x)=1+\frac{c_1}{(x+b_1)}+\frac{c_2}{(x+b_2)},
          \eeqn
          where $c_1=\frac{(-b_1+a_1)(-b_1+a_2)}{b_2-b_1}$ and $c_2=\frac{(-b_2+a_1)(-b_2+a_2)}{b_1-b_2},$ we note that
          \beqn
r(n)=1+\int_{(0,1)}t^n(c_1t^{b_1}+c_2t^{b_2})t^{-1}dt, \quad n \in \mathbb Z_+.
          \eeqn
          Since $r(0)$ is finite, the function $(c_1t^{b_1}+c_2t^{b_2})t^{-1}$ is Lebesgue integrable on $[0,1].$
        In view of Theorem~\ref{BCR}(ii), it suffices to check that $c_1t^{b_1}+c_2t^{b_2}\Le0$ for all $t \in (0,1)$ to establish both necessity and sufficiency. This condition is equivalent to requiring $c_1\Le0$ and $c_1+c_2\Le0.$ Further, it is equivalent to $a_1\Le b_1\Le a_2, a_1+a_2\Le b_1+b_2.$ 
            Now if $b_1= b_2$ then again by the partial fraction decomposition, 
          \beqn
          r(x)=1+\frac{c_1}{(x+b_1)}+\frac{c_2}{(x+b_1)^2},
          \eeqn
          where $c_1=a_1+a_2-2b_1$ and $c_2=(-b_1+a_1)(-b_1+a_2).$ Note that 
          \beqn
r(n)=1+\int_{(0,1)}t^n(c_1-c_2 \ln(t))t^{b_1-1}dt, \quad n \in \mathbb Z_+.
          \eeqn
It is easy to verify that $c_1-c_2\ln(t)\Le 0, t \in (0,1)$ if and only if $c_1\Le0 $ and $ c_2\Le 0.$ Latter conditions are equivalent to $a_1+a_2\Le 2b_1$ and $b_1\in [a_1,a_2].$ This completes the proof.\\
          $\mathrm{(iii)}$  Analogous argument, as used in the second part, works here as well.
      \end{proof}
The following lemma plays a crucial role in proving Theorem~\ref{main-3}.
\begin{lemma}\label{imp-inq}
    Let $k \in \mathbb N.$ For any collection of real numbers $\{c_i\}_{i=1}^{k}$ satisfying  $ \sum_{i=1}^lc_i\Le0, \quad l \in \{1,\ldots, k\},$ and any collection $\{b_i\}_{i=1}^{k}$ of strictly increasing positive real numbers, 
        the function $$w(t)=\sum_{i=1}^kc_it^{b_i}\Le 0, \quad t \in (0,1).$$
\end{lemma}
\begin{proof} We use the induction on $k$ to prove the lemma.
     For $k=1,$ since $c_1 \Le 0,$ $w(t)=c_1t^{b_1}\Le 0, t\in (0,1).$ Assume the induction hypothesis i.e. the statement of the lemma is true for any $j < k.$ Now consider $w(t)=\sum_{i=1}^kc_it^{b_i}$ where $c_i$ and $b_i$ are as in the statement. Let $J=\{i: c_i>0\}.$ If $J$ is empty, then we are done. Let $i_0$ be the largest element of $J.$ Let if possible $k \notin J.$ Then $i_0\neq k.$ For every $t \in (0,1),$
    \beqn
    w(t)=\sum_{i=1}^{i_0-1}c_i t^{b_i}+c_{i_0}t^{b_{i_0}}+\sum_{ i =i_0+1 }^{k}c_it^{b_i}
     \Le\sum_{i=1}^{i_0-1}c_i t^{b_i}+c_{i_0}t^{b_{i_0}}.
    \eeqn
    By the induction hypothesis, we have 
    $$\sum_{i=1}^{i_0-1}c_i t^{b_i}+c_{i_0}t^{b_{i_0}}\Le 0, \quad t \in [0,1].$$
     If $k \in J.$ Then $i_0=k.$ We divide this case into the following two cases.\\
      \textbf{Case (i):  $c_{k-1}<0.$}
    Note that for every $t \in (0,1),$
    \beqn
    w(t)=\sum_{i=1}^{k-2}c_i t^{b_i}+c_{k-1}t^{b_{k-1}}+c_{k}t^{b_{k}}
     \Le\sum_{i=1}^{k-2}c_i t^{b_i}+(c_{k-1}+c_k)t^{b_{k}}
     \Le 0.
     \eeqn
     The last inequality follows from the induction hypothesis.\\
     \textbf{Case (ii): $c_{k-1}\Ge 0.$}
     Note that for every $t \in (0,1),$
    \beqn
    w(t)=\sum_{i=1}^{k-2}c_i t^{b_i}+c_{k-1}t^{b_{k-1}}+c_{k}t^{b_{k}}
     \Le\sum_{i=1}^{k-2}c_i t^{b_i}+(c_{k-1}+c_k)t^{b_{k-1}}
     \Le 0.
     \eeqn
     The last inequality follows from the induction hypothesis.
     This completes the proof.
\end{proof}
Next, we present the proof of Theorem~\ref{main-3} by generalizing the idea used in the proof of Theorem~\ref{CA-root}.
    
    \begin{proof}[Proof of Theorem~\ref{main-3}]
        Note that by partial fraction decomposition, we have
    \beq \label{coef-1}
    \frac{p(x)}{q(x)}=a_0+a_1x+\frac{c_1}{x+b_1}+\cdots+\frac{c_k}{x+b_k},
    \eeq
    where $c_i=\frac{p(-b_i)}{\prod_{j=1, j \neq i}^k(b_j-b_i)}$ and $a_1$ is the coefficient of $x^{k+1}$ in $p$ which is non negative. It is easy to see using \eqref{coef-1} that
    \beqn
    \frac{p(n)}{q(n)}=a_0+a_1n + \int_{(0,1)}t^nw(t)t^{-1}dt,\quad n \in \mathbb Z_+,
    \eeqn
    where $w(t)=\sum_{i=1}^kc_it^{b_i}, t \in (0,1).$ To prove the sequence  $\left\{\frac{p(n)}{q(n)}\right\}_{n \in \mathbb Z_+}$ is completely alternating, by \eqref{CA-measure}, it is now sufficient to check that $w(t)\Le 0,$ for $t \in (0,1).$ This follows from Lemma~\ref{imp-inq}.  This completes the proof.
    \end{proof}
 \begin{remark}
     In Theorem~\ref{main-3}, $\deg p \Le k+1$ turns out to be a necessary condition. Otherwise, if the sequence corresponding to the rational function $r_k$ with $\deg p > k+1$ is {\it completely alternating} then
     the sequence $\left\{\frac{p(n)}{n\prod_{i=1}^{k}(n+b_i)}\right\}_{n \in \mathbb N}$ turns out to be divergent, that is, 
     \[
     \lim_{n\to \infty}\frac{p(n)}{n\prod_{i=1}^{k}(n+b_i)}
     \] is not finite. But on the other hand, according to the Theorem~\ref{BCR}(ii), for a completely alternating sequence $\{a_n\}_{n \in \mathbb Z_+}$, ${\lim\limits_{n \to \infty }}\frac{a_n}{n}=b,$ where $b$ is a non negative real number as in \eqref{CA-sequence}.  This also explains that the non-negativity of the coefficient of $x^{k+1}$ in Theorem~\ref{main-3} is also necessary.
     \end{remark}
    
    As an application of Theorem~\ref{main-3}, we obtain the following classes of completely alternating sequences. 
    \begin{corollary}\label{CA-Ex}
         Let $q(x)=(x+b_1)\ldots(x+b_{k})$ and  $p$ be any of the following$:$
         \begin{enumerate}
             \item[$\mathrm{(i)}$]  $p(x)=(x+a_1)\ldots(x+a_{k})$ with  $0< a_1 < b_1 < a_2 < b_2 < \ldots < a_k < b_k,$ 
              \item[$\mathrm{(ii)}$] $p(x)=(x+a_1)\ldots(x+a_{k+1})$ with  $0< a_1 < b_1 < a_2 < b_2 < \ldots < a_k < b_k< a_{k+1}.$ 
         \end{enumerate}
         Then the sequence $\left\{\frac{p(n)}{q(n)}\right\}_{n \in \mathbb Z_+}$ is completely alternating.
    \end{corollary}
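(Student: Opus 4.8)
The plan is to invoke Theorem~\ref{main-3} directly, so the entire task reduces to verifying the hypothesis $\sum_{i=1}^l c_i \Le 0$ for each $l \in \{1,\ldots,k\}$, where the coefficients are $c_i = \frac{p(-b_i)}{\prod_{j=1,j\neq i}^k (b_j - b_i)}$. First I would compute the sign of each individual $c_i$ from the interlacing hypothesis. For case~$\mathrm{(i)}$, with $p(x) = \prod_{m=1}^k (x+a_m)$, the numerator is $p(-b_i) = \prod_{m=1}^k (a_m - b_i)$. The interlacing $a_1 < b_1 < a_2 < b_2 < \cdots < a_k < b_k$ tells me precisely which factors are positive: $a_m - b_i > 0$ exactly when $a_m > b_i$, i.e.\ when $m \Ge i+1$, so there are $k-i$ positive factors and $i$ negative factors, giving $\sgn\, p(-b_i) = (-1)^i$. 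For the denominator $\prod_{j\neq i}(b_j - b_i)$, since the $b_j$ are increasing, $b_j - b_i > 0$ for $j > i$ and $< 0$ for $j < i$, so there are $i-1$ negative factors and the denominator has sign $(-1)^{i-1}$. Hence each $c_i$ has sign $(-1)^i/(-1)^{i-1} = -1$, so every $c_i < 0$ individually, and the partial-sum condition $\sum_{i=1}^l c_i \Le 0$ holds trivially for all $l$.

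For case~$\mathrm{(ii)}$, $\deg p = k+1$ and $p(-b_i) = \prod_{m=1}^{k+1}(a_m - b_i)$. The extra root $a_{k+1} > b_k \Ge b_i$ contributes one additional positive factor for every $i$, so now there are $k-i+1$ positive factors and $i$ negative ones, giving $\sgn\, p(-b_i) = (-1)^i$ again. The denominator is unchanged with sign $(-1)^{i-1}$, so once more every $c_i < 0$, and the partial-sum hypothesis is again satisfied trivially. I should also note, to match the setup of Theorem~\ref{main-3}, that $p$ has positive leading coefficient and its range on $\mathbb Z_+$ lies in $(0,\infty)$, which is immediate since all roots $-a_m$ are negative.

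Thus in both cases the far stronger conclusion $c_i \Le 0$ for every individual $i$ holds, which a fortiori gives the required cumulative inequalities, and Theorem~\ref{main-3} applies to yield complete alternation. The only point requiring genuine care, rather than routine computation, is the sign bookkeeping: one must carefully read off from the strict interlacing exactly how many factors $a_m - b_i$ (resp.\ $b_j - b_i$) are positive versus negative. I would organize this as a short parity count, presenting the two cases in parallel since they differ only by the single extra factor coming from $a_{k+1}$. No subtler obstruction arises, because the interlacing is engineered precisely so that the signs of numerator and denominator track each other and cancel down to a uniform negative sign.
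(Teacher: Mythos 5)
Your proposal is correct and is essentially the paper's own proof: the paper likewise reduces to Theorem~\ref{main-3} by showing each individual $c_i<0$, only it does the sign bookkeeping by regrouping the factors of $c_i$ into manifestly positive ratios with an overall minus sign, which is the same parity count you perform. No gap; the approaches coincide.
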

    \begin{proof}
        (i) Note that 
        \beqn
        \frac{p(x)}{q(x)}=1+\sum_{i=1}^{k}\frac{c_{i}}{(x+b_i)},
        \eeqn
        where $c_i= \frac{\prod_{l=1}^k (a_l-b_i)}{\prod_{1 \Le l \neq i \Le k}(b_l-b_i)}, \,i \in \{1, \ldots,k\}.$ This can be rewritten as
        \beqn
        c_i=- \frac{\prod_{1 \Le l \Le i} (b_i-a_{l})}{\prod_{1 \Le l \Le i-1}(b_i-b_l)}\frac{\prod_{i+1 \Le l \Le k} (a_l-b_i)}{\prod_{i+1 \Le l \Le k}(b_l-b_i)},
        \eeqn
        (we assume that the product over the empty set is $1$). Under the given conditions, it is easy to see that $c_i<0.$ By Theorem~\ref{main-3}, the sequence $\left\{\frac{p(n)}{q(n)}\right\}_{n \in \mathbb Z_+}$ is completely alternating.  A similar proof works for (ii).
    \end{proof}

    \begin{remark}\label{Necessray}
        The sufficient conditions in Corollary~\ref{CA-Ex} turn out to be necessary if we further assume that, in the first case where the degree of $p$ is same as the degree of $q$, the sequences $\left\{ \frac{(n+a_i)(n+a_{i+1})}{n+b_i} \right\}_{n \in \mathbb Z_+}$ for each $i \in \{1,\ldots,k-1\}$ and $\left\{ \frac{n+a_k}{n+b_k} \right\}_{n \in \mathbb Z_+}$ are complete alternating and in the second case where the degree of $p$ is bigger than the degree of $q$ by 1, the sequences $\left\{ \frac{(n+a_i)(n+a_{i+1})}{n+b_i} \right\}_{n \in \mathbb Z_+}$ for each $i \in \{1,\ldots,k\}$ are completely alternating. The proof follows from the Theorem~\ref{CA-root-2}.
    \end{remark}
The next result helps in finding more classes of completely alternating sequences where not all $c_i$'s are negative.
    \begin{proposition}\label{sumc-i}
        For any rational function $\prod_{i=1}^k\frac{x+a_i}{x+b_i},$ let $c_i$ denote the coefficient in the partial fraction decomposition of $r(x).$ Then $\sum_{i=1}^k c_i= \sum_{i=1}^k (a_i-b_i).$
    \end{proposition}
    \begin{proof}
        We use induction to prove the statement. For $k=1,$
        $$\frac{x+a_1}{x+b_1}=1+\frac{a_1-b_1}{x+b_1}.$$
        So the statement is true for $k=1,$ since $c_1=a_1-b_1.$ Assume that the statement is true for any $l<k.$ Then
        \beqn
\prod_{i=1}^{k}\frac{x+a_i}{x+b_i}&=&\left(\frac{x+a_1}{x+b_1}\right)\prod_{i=2}^k\left(\frac{x+a_i}{x+b_i}\right)=\left(1+\frac{a_1-b_1}{x+b_1}\right)\left(1+\sum_{i=2}^k\frac{c_i^{'}}{x+b_i}\right)\\
&=& 1+\frac{a_1-b_1}{x+b_1}+\sum_{i=2}^k\frac{c_i^{'}}{{x}+b_i}+\sum_{i=2}^k\frac{(a_1-b_1)c_i^{'}}{(x+b_1)(x+b_i)}\\
&=& 1+\frac{a_1-b_1}{x+b_1}+\sum_{i=2}^k\frac{c_i^{'}}{x+b_i}+\sum_{i=2}^k\frac{(a_1-b_1)c_i^{'}}{b_1-b_i}\left(\frac{1}{x+b_i}-\frac{1}{x+b_1}\right).
\eeqn
From this expression, we note that $\sum_{i=1}^kc_i=a_1-b_1+\sum_{i=2}^{k}c_{i}^{'}.$ From the induction hypothesis, we obtain $\sum_{i=1}^kc_i=\sum_{i=1}^{k}a_i-b_i.$
    \end{proof}
Now we present some classes of completely alternating sequences for which not all $c_i$'s are negative.
 \begin{corollary}\label{c_n-pos}
      Let $k\geq 2$ and $0<a_1<b_1<a_2<b_2<\ldots<a_{k-1}<b_{k-1}<b_k<a_k$ with $a_k\Le b_k+\sum_{i=1}^{k-1}(b_i-a_i).$ Then the sequence $\left\{\prod_{i=1}^k\frac{(n+a_i)}{(n+b_i)}\right\}_{n \in \mathbb Z_+}$ is completely alternating.  
    \end{corollary}
    \begin{proof}
    Let $c_i$'s be as in Theorem~\ref{main-3}.  Then 
     \beqn
        c_i=- \frac{\prod_{1 \Le l \Le i} (b_i-a_{l})}{\prod_{1 \Le l \Le i-1}(b_i-b_l)}\frac{\prod_{i+1 \Le l \Le k} (a_l-b_i)}{\prod_{i+1 \Le l \Le k}(b_l-b_i)},
        \eeqn
It is not difficult to verify that $c_1<0,\ldots, c_{k-1}<0,c_k>0.$ In view of Theorem~\ref{main-3}, it suffices to show $\sum_{i=1}^kc_i\Le 0.$ This is easy to check using the assumption and Proposition~\ref{sumc-i}
    \end{proof}
    \begin{remark}
        By Corollaries~\ref{CA-Ex}(i), \ref{c_n-pos}, it is easy to see that for any $b_k>b_{k-1},$   choice of $a_k$ lies in $[b_{k-1},b_k+\sum_{i=1}^{k-1}(b_i-a_i)]$ and the upper bound is strict, this follows from the fact that reciprocal of the completely alternating sequence $\left\{\prod_{i=1}^k\frac{(n+a_i)}{(n+b_i)}\right\}_{n \in \mathbb Z_+}$ is a completely monotone sequence and the Proposition~\ref{nec-cond} (to be seen later).
    \end{remark}
    The following corollary presents a complete characterization of certain classes of completely monotone nets in two variables. This is an application of Theorems~\ref{CAJCM}, \ref{CA-root-2} (also see \cite{ACN}[Theorems~3.1, 3.3, Question~1.1]).
    \begin{corollary}\label{bi-comp}
        Let $a_j, b_j \in (0, \infty),$ $j= 1,2.$ Then the following statements are valid$:$
\begin{enumerate}
\item[$\mathrm{(i)}$] $\left\{\frac{m+a_1}{(m+b_1)+(m+a_1)n}\right\}_{m,n \in \mathbb{Z}_+} \in \mathcal{M}(\mathbb Z_+^2)$  if and only if $b_1 \Le a_1.$
\item[$\mathrm{(ii)}$] $\left\{\frac{m+a_1}{(m+b_1)(m+b_2)+(m+a_1)n}\right\}_{m,n \in \mathbb{Z}_+} \in \mathcal{M}(\mathbb Z_+^2)$ if and only if $b_1\Le a_1\Le b_2.$
\item[$\mathrm{(iii)}$]   $\left\{\frac{(m+a_1)(m+a_2)}{(m+b_1)(m+b_2)+(m+a_1)(m+a_2)n}\right\}_{m,n \in \mathbb{Z}_+} \in \mathcal{M}(\mathbb Z_+^2)$   if and only if $$b_1\Le a_1\Le b_2, b_1+b_2 \Le a_1+a_2.$$
\end{enumerate}
    \end{corollary}
    \begin{proof}
        The proof follows from Theorem~\ref{CAJCM} and Theorem \ref{CA-root-2}.
    \end{proof}
    As an immediate application of Theorem~\ref{CAJCM}, we are able to recover Theorem 3.1 in \cite{ACN} and along with that, we obtain another equivalent statement.  % The following result follows from Theorem~\ref{CAJCM}  and \cite{ACN},Theorems~3.1].
    \begin{corollary}\label{bi-poly} For $a>0,b\Ge0, c\Ge0, d\Ge0,$ the following are equivalent:
    \begin{enumerate}
        \item[$\mathrm{(i)}$]   The net $\left\{\frac{1}{a +bm+cn+dmn}\right\}_{m,n \in \mathbb{Z}_+}$ is completely monotone.
         \item[$\mathrm{(ii)}$]   The net $\left\{\frac{c+dm}{a +bm+cn+dmn}\right\}_{m,n \in \mathbb{Z}_+}$ is completely monotone.
          \item[$\mathrm{(iii)}$]  $ad-bc\Le 0.$
    \end{enumerate}
\end{corollary}
\begin{proof}
The equivalence of (ii) and (iii) follows from Theorem~\ref{CAJCM}. Note that, (ii)$\implies$(i) is immediate from the fact that the sequence $\{\frac{1}{c+dm}\}_{m \in \mathbb Z_+}$ is completely monotone and (i)$\implies$(iii) follows from Lemma 2.9 in \cite{ACN}.   
\end{proof}
     \end{section}
     \allowdisplaybreaks
     
\begin{section}{Necessary conditions}\label{sec Necessary conditions}
In this section, we study the necessary conditions for the complete monotonicity of  \eqref{Problem-2}. As a consequence, we also obtain necessary conditions for the sequences of the type  \eqref{Problem-2} which are completely alternating.

We first show that Ball's sufficiency conditions for the complete monotonicity of  \eqref{Problem-2} are not necessary and then present a proof of Theorem~\ref{main-1} with the necessary results required to prove.\\
\textbf{Example:}
    Let $p(x)=(x+1.5)(x+2)(x+4)$  and $q(x)=(x+1)(x+3)(x+3.5).$ For every $n \in \mathbb Z_+,$
\beq \label{Example-1}\notag
\frac{p(n)}{q(n)}&=& 1+\frac{3/10}{n+1}-\frac{3/2}{n+3}+\frac{12/5}{2n+7}\\
&=& \int_{[0,1]}t^n(d\delta_{1}(t)+(\frac{3}{10}-\frac{3}{2}t^2+\frac{6}{5}t^{5/2})dt).
\eeq
Let $w(t)=\frac{3}{10}-\frac{3}{2}t^2+\frac{6}{5}t^{5/2},\,  t \in (0,1).$  Note that $$w'(t)=-3t(1-\sqrt{t})<0, \quad t \in (0,1).$$
This yields that $w(\cdot)$ is decreasing on $(0,1).$ Since $w(1)=0,$ $w(t)\Ge0, t \in [0,1].$ This, together with \eqref{Example-1} shows that $\left\{\frac{p(n)}{q(n)}\right\}_{n \in \mathbb Z_+}$ is a completely monotone sequence. Equivalently, the function $\frac{p(x)}{q(x)}$ is completely monotone (see \cite[Proposition~6]{AA2001}). But this does not satisfy  Ball's sufficiency conditions.

The following lemma presents a simple characterization of the complete monotonicity of a special class. A proof in a more general setting of the following lemma has been obtained in \cite[Theorem~4.1]{RZ2019}. We present an elementary proof of the following lemma, and this will play an essential role in proving Theorem~\ref{main-1}.
\begin{lemma}\label{special-case}
    Let $k \in \mathbb N,$ and $0<b_1\Le b_2\Le\ldots\Le b_k.$  Let  $p(x)=(x+a_1), a_1>0$ and $q(x)=(x+b_1)(x+b_2)\ldots(x+b_k).$ Then, $\left\{\frac{p(n)}{q(n)}\right\}_{n \in \mathbb Z_+}$ is completely monotone  if and only if $b_1\Le a_1.$
\end{lemma}
\begin{proof}
    For a proof of the sufficiency part, assume that $b_1\Le a_1.$ It is easy to see that, $\left\{\frac{n+a_1}{n+b_1}\right\}_{n \in \mathbb Z_+}$ is a  sum of two completely monotone  sequence, hence $\left\{\frac{n+a_1}{n+b_1}\right\}_{n \in \mathbb Z_+}$ is completely monotone. Since the product of completely monotone sequences is completely monotone, the sequence $\left\{\frac{p(n)}{q(n)}\right\}_{n \in \mathbb Z_+}$ is completely monotone. For a proof of necessity part, assume that the sequence $\left\{\frac{p(n)}{q(n)}\right\}_{n \in \mathbb Z_+}$ is completely monotone. By multiplying, $\frac{x+b_i}{x+b_i-h}$ for any $h \in (0, b_i)$ if necessary, we can assume that all $b_i$'s are distinct. Let, if possible, $b_1>a_1.$ For every $n \in \mathbb Z_+,$
    \beqn
    \frac{p(n)}{q(n)}&=&\left(1+\frac{a_1-b_1}{n+b_1}\right)\frac{1}{(n+b_2)\ldots(n+b_k)}\\
    &=& \frac{1}{(n+b_2)\ldots(n+b_n)}+\frac{a_1-b_1}{(n+b_1)(n+b_2)\ldots(n+b_k)}.
    \eeqn
    By the partial fraction decomposition, there exists $\alpha_1, \ldots \alpha_n \in \mathbb R,$ such that
    \beqn
    \frac{p(n)}{q(n)}&=&(a_1-b_1)\frac{\alpha_1}{n+b_1}+\frac{\alpha_2}{n+b_2}\cdots +\frac{\alpha_k}{n+b_k}, \quad n \in \mathbb Z_+.
    \eeqn
    Observe that $\alpha_1>0.$ Note that the representing measure of $\left\{\frac{p(n)}{q(n)}\right\}_{n \in \mathbb Z_+}$ is the weighted Lebesgue measure with weight function given by 
    $$w(t)=t^{b_1-1}((a_1-b_1)\alpha_1 +\alpha_2 t^{b_2-b_1}\cdots +\alpha_k t^{b_k-b_1}), \quad t \in (0,1).$$
    It is easy to see that there exists $\epsilon>0$ such that $w(t)<0, \, t \in (0, \epsilon).$ This contradicts the assumption. Hence $b_1\Le a_1.$
\end{proof}

We recall the following result which will be crucial in proving Theorem~\ref{main-1}.
\begin{proposition}\label{nec-cond}
Let $\{a_i\}_{i=1}^k$and $\{b_i\}_{i=1}^k$ be a non-decreasing sequence of positive real numbers.  Let $p(x)=(x+a_1)\ldots(x+a_{k}), q(x)=(x+b_1)\ldots(x+b_{k}).$ Assume that $\left\{\frac{p(n)}{q(n)}\right\}_{n \in \mathbb Z_+}$ is completely monotone. Then, $\sum_{i=1}^k b_i \Le \sum_{i=1}^ka_i.$
\end{proposition}
\begin{proof}
    See \cite[Remark]{Kball}.
\end{proof}
 Now, K. Ball's sufficiency conditions combined with Propositions~\ref{nec-cond} and Lemma~\ref{special-case} provide a complete characterization of completely monotone rational functions of the type \eqref{Problem-2} in the case of $k=2.$
   \begin{proposition}\label{bi-case}
        Let $p,q$ be polynomials given by $p(x)=(x+a_1)(x+a_2) $ and $q(x)=(x+b_1)(x+b_2)$ with $0<a_1 \Le a_2,0<b_1\Le b_2$ . Then, $\left\{\frac{p(n)}{q(n)}\right\}_{n \in \mathbb Z_+}$ is completely monotone if and only if $b_1\Le a_1, b_1+b_2\Le a_1+a_2.$
\end{proposition} 
 We are now ready to prove Theorem~\ref{main-1}.
 \begin{proof}[Proof of Theorem~\ref{main-1}]
 \textit{(i)} We use the finite induction on $k$. For the base cases, take $k=1$ and $k=2$ follows from Proposition~\ref{bi-case}. Now, assume the induction hypothesis. Let if possible there exists a $i_0$ such that $a_{i_0}< b_{i_0}$ (otherwise the result is true trivially). Clearly $i_0\neq 1,$ by Lemma~\ref{special-case}. Also note that, the product of two completely monotone sequences is completely monotone. Now, by using these facts, we conclude that the sequence
    \beqn
    \prod_{i=1, i \neq i_0}^{k} \frac{(n+a_{i})}{(n+b_{i})}
     \eeqn
     is completely monotone.
   Now take $a_i^{'}=a_i, i<i_0$ and $a_i^{'}=a_{i+1}, i_0<i+1\Le k.$ Similarly
   take $b_i^{'}=b_i, i<i_0$ and $b_i^{'}=b_{i+1}, i_0<i+1\Le k.$ By induction hypothesis there exists $\tau \in S_{k-1}$ such that $\tau (1)=1$ and
   \beq \label{a-b-iq}\sum_{i=1}^{l} b_{\tau(i)}^{'}\Le \sum_{i=1}^{l} a_{\tau(i)}^{'}, \quad l \in \{1,\ldots, k-1\}.
   \eeq
   Let $\sigma\in S_k$ be such that $\sigma(i_0)=k, \sigma(i)=i, i < i_{0},  \sigma(i)=i-1,  i_{0}<i\Le k.$ Then by \eqref{a-b-iq}, we have
  \beq \label{a-b-inq}
  \sum_{i=1}^{l} b_{\sigma^{-1} \tau(i)}\Le \sum_{i=1}^{l} a_{\sigma^{-1} \tau(i)} \quad l \in \{1,\ldots, k-1\}.
  \eeq
   By taking $\tau(k)=k,$ we have $\sigma^{-1} \tau \in S_k$ which is the desired permutation. By Proposition \ref{nec-cond} and \eqref{a-b-inq}, we obtain \eqref{sigma-tau}. \\
\textit{(ii)} This follows from \textit{(i)} and the fact that if the sequence $\{a(k)\}_{k \in \mathbb Z_+}$ of positive real numbers is completely alternating then $\{1/a(k)\}_{k \in \mathbb Z_+}$ is completely monotone.
 \end{proof}
  It might be tempting to believe that the new necessary conditions for the complete monotonicity of \eqref{Problem-2} are sufficient. But we now present an example which shows that the necessary conditions obtained in Theorem~\ref{main-1} are not sufficient.
  
\noindent\textbf{Example:}
Let $p(x)=(x+6)(x+8)(x+14)$  and $q(x)=(x+5)(x+10)(x+13).$  For every $n \in \mathbb Z_+,$
\beq \label{example-2}\notag
\frac{p(n)}{q(n)}&=& 1+\frac{27/40}{n+5}-\frac{32/15}{n+10}+\frac{35/24}{n+13}\\
&=& \int_{[0,1]}t^n(d\delta_{1}(t)+(\frac{27}{40}t^4-\frac{32}{15}t^9+\frac{35}{24}t^{12})dt).
\eeq
Let $w(t)=\frac{27}{40}t^4-\frac{32}{15}t^9+\frac{35}{24}t^{12},\,  t \in (0,1).$ It is easy to verify that $w(t)\ngeq 0,$ for all $t \in (0,1).$ This together with \eqref{example-2} yields that $\left\{\frac{p(n)}{q(n)}\right\}_{n \in \mathbb Z_+}$ is not completely monotone.  
\end{section}

\noindent\textbf{Acknowledgements:} The authors are grateful to Prof. Sameer Chavan for some insightful comments/suggestions. The second named author acknowledges the Department of Mathematics, Birla Institute of Technology and Science K.K. Birla Goa Campus for warm hospitality and is grateful to the DST-INSPIRE Faculty Fellowship. The first-named author's research is supported by the DST-INSPIRE Faculty Fellowship No. DST/INSPIRE/04/2020/001250. 

{}

\end{document}